\newtheorem{algorithm}{Algorithm}[section]
\newtheorem{theorem}{Theorem}[section]
\numberwithin{equation}{section}
\newtheorem{remark}{Remark}[section]
\begin{document}

\date{}
\title{Inference on the Parameters of the Weibull \\ Distribution Using Records}
\author{A. A. Jafari\thanks{Corresponding: aajafari@yazd.ac.ir} , H. Zakerzadeh \\
{\small Department of Statistics, Yazd University, Yazd, Iran}}
\date{}
\maketitle
\begin{abstract}
The Weibull distribution is a very applicable model for the lifetime data. In this paper,
we have investigated inference on the parameters of Weibull distribution based on record values.
We first propose a simple and exact test and a confidence interval for the shape parameter. Then, in addition to  a generalized confidence interval, a generalized test variable is derived for the scale parameter  when the shape parameter is unknown. The paper presents a simple and exact joint confidence region as well. 
 In all cases, simulation studies show that the proposed approaches are more satisfactory and  reliable than previous methods. All proposed approaches are illustrated using a real example.

\end{abstract}

\noindent{\bf Keywords:} Coverage probability; Generalized confidence interval; Generalized {\it p}-value;  Records; Weibull distribution.

\noindent{\bf MSC2000:} 62F03; 62E15; 62-04.

\section{Introduction}

The Weibull distribution is a well-known distribution that is widely used for lifetime models. It has numerous varieties of shapes and demonstrates considerable flexibility  that enables it to have increasing and decreasing failure rates. Therefore, it is used for many applications for example in hydrology, industrial engineering, weather forecasting and insurance. The Weibull distribution with parameters $\alpha$ and $\beta$, denoted by $W\left(\alpha,\beta\right)$, has a cumulative distribution function (cdf)
\[F\left(x\right)=1-e^{-{\left(\frac{x}{\alpha}\right)}^\beta},\ \ \ x>0,\ \ \ \alpha>0,\ \ \ \beta>0,\]
and probability density function (pdf)
\[f\left(x\right)=\frac{\beta }{{\alpha }^{\beta }}x^{\beta-1}e^{-{\left(\frac{x}{\alpha}\right)}^\beta},\ \ \ \ x>0.\]

The Weibull distribution is a generalization of the exponential distribution and Rayleigh distribution. Also, $Y={\log\left(X\right)}$ has the Gumbel distribution with parameters $b=\frac{1}{\beta }$ and $a={\log \left(\alpha \right)}$, when $X$ has a Weibull distribution with parameters $\alpha $ and
$\beta$.

Let $X_1,X_2,\dots $ be an infinite sequence of independent identically distributed random variables from a same population with the cdf  $F_{\theta }$, where $\theta$ is a parameter. An observation $X_j$ will be called an upper record value (or simply a record) if its value exceeds that of all previous observations. Thus, $X_j$ is a record if $X_j > X_i$ for every $i<j$. An analogous definition deals with lower record values. The record value sequence $\{R_n\}$ is defined by
\[R_n=X_{T_n},\ \ \ n=0,1,2,\dots.\]
where $T_n$ is called the record time of $n$th record and is defined as  $T_n={\min  \{j:X_j>X_{T_{n-1}}\}}$ with $T_0=1$.

Let $R_0,\dots ,R_n$ be the first $n+1$ upper record values from the cdf $F_{\theta }$ and the pdf $f_{\theta }$. Then, the joint distribution of the first $n+1$ record values is given by
\begin{equation}\label{eq.fR}
f_{{\boldsymbol R}}\left({\boldsymbol r}\right)=f_{\theta }\left(r_n\right)\prod^{n-1}_{i=0}{\frac{f_{\theta }\left(r_i\right)}{1-F_{\theta }\left(r_i\right)}},\ \ \ \ r_0<r_1<\dots <r_n,
\end{equation}
where ${\boldsymbol r}=(r_0,r_1,\dots ,r_n)$ and ${\boldsymbol R}=(R_0,R_1,\dots ,R_n)$ \citep[for more details see][]{ar-ba-na-98}.

\cite{chandler-52}
launched a statistical study of the record values, record times and inter record times. Record values and the associated statistics are of interest and importance in the areas of meteorology, sports and economics.
\cite{ahsanullah-95}
and
\cite{ar-ba-na-98}
are two good references about records and their properties.

Some papers considered inference on the Weibull distribution based on record values:
\cite{dallas-82}
discussed some distributional results based on upper record values.
\cite{ba-ch-94}
established some simple recurrence relations satisfied by the single and the product moments,
and derived the BLUE of the scale parameter when the shape parameter is known.
 \cite{chan-98}
provided a conditional method to derive exact intervals for location and scale parameters of location-scale family that  can be used to derive exact intervals for the shape parameter.
\cite{wu-ts-06}
provided some pivotal quantities to test and establish confidence interval of the shape parameter based on the first $n+1$ observed upper record values.
\cite{so-ab-su-06}
derived the Bayes estimates based on record values for the parameters with respect to squared error loss function and LINEX loss function.
\cite{as-ab-11}
proposed joint confidence regions for the parameters.
\cite{te-gu-12}
computed the coefficient of skewness of upper/lower record statistics.
\cite{te-na-13}
derived exact expressions for constructing bias corrected maximum likelihood estimators (MLE's) of the parameters for the Weibull distribution based on upper records.
\cite{go-lo-sa-12} obtained the asymptotic properties
 for the counting process of $\delta$-records among the first
$n$ observations.

In this paper, we consider inference about the parameters of Weibull distribution based on record values. First, we will propose a simple and exact method for constructing confidence interval and testing the hypotheses about the shape parameter $\beta $. Then using the concepts of generalized {\it p}-value and generalized confidence interval,  a generalized approach for inference about the scale parameter $\alpha $ will be derived.
\cite{ts-we-89}
introduced the concept of generalized {\it p}-value, and
\cite{weerahandi-93}
introduced the concept of generalized confidence interval. These approaches have been used successfully to address several complex problems
\cite[see][]{weerahandi-95}
such as
confidence interval for the common mean of several log-normal distributions \citep{be-ja-06-ge},
confidence interval for the mean of Weibull distribution
\citep{kr-li-xi-09},
inference about the stress-strength reliability involving two independent Weibull distributions
\citep{kr-li-10},
and comparing two dependent generalized variances \citep{jafari-12}.

We also present an exact joint confidence region for the parameters. Our simulation studies show that the area of our joint confidence region is smaller than those provided by other existing methods.

The rest of this article is organized as follows: A simple method for inference about shape parameter and a generalized approach for inference about the scale parameter are proposed in Section \ref{sec.inf}.
Furthermore, a simulation study is performed and a real example is proposed in this Section. We also present a joint confidence region for the parameters $\alpha $ and $\beta $ in Section \ref{sec.joint}.

\section{ Inference on the parameters}
\label{sec.inf}
Suppose $R_0, R_1,...,R_n$ are the first $n+1$ upper record values from a Weibull distribution with parameters $\alpha $ and $\beta $. In this section, we consider inference on the parameters $\alpha $ and $\beta$.
From \eqref{eq.fR}, the joint distribution of these record values can be written as
\begin{equation}\label{eq.rfw}
f_{\boldsymbol{R}}\left(\boldsymbol r\right)=\frac{{\beta }^{n+1\ }}{{\alpha }^{\beta (n+1)}}e^{-{\left(\frac{r_n}{\alpha}\right)}^\beta}\prod^n_{i=0}{{r_i}^{\beta-1}}\ \ \ \ \ 0<r_0<r_1<\dots <r_n.
\end{equation}
Therefore, $\left(R_n,\sum^n_{i=0}{{\log  \left(R_i\right)}}\right)$ is sufficient statistic for $\left(\alpha,\beta \right)$. Moreover, it can be easily shown that the  MLE's of the parameters $\alpha $ and $\beta $ are
\begin{equation}\label{eq.mle}
\hat{\beta}=\frac{n+1}{\sum^n_{i=0}{{\log  \left(\frac{R_n}{R_i}\right)}}},\ \ \ \ \ \ \ \hat{\alpha }=\frac{R_n}{{\left(n+1\right)}^{\frac{1}{\hat{\beta }}}}.
\end{equation}

\bigskip
\begin{theorem} \label{thm.1}
Let $R_0,R_1,...,R_n$ be the first $n+1$ upper record values from a Weibull distribution. Then
\begin{enumerate}
  \item[i.] $U=2\beta \sum^n_{i=0}{{\log  \left(\frac{R_n}{R_i}\right) }}$ has a chi-square distribution with $2n$ degrees of freedom.
  \item [ii.] $V=2{\left(\frac{R_n}{\alpha}\right)}^{\beta }$ has a chi-square distribution with $2n+2$ degrees of freedom.
  \item [iii.] $U$ and $V$ are independent.
\end{enumerate}
\end{theorem}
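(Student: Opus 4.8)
The plan is to reduce to the standard exponential distribution and then perform a single change of variables that simultaneously exposes both statistics and their independence. Because $t\mapsto (t/\alpha)^\beta$ is a strictly increasing bijection of $(0,\infty)$, applying it to the original i.i.d.\ sequence yields an i.i.d.\ $\mathrm{Exp}(1)$ sequence with the same record times, so $T_i:=(R_i/\alpha)^\beta$, $i=0,\dots,n$, are exactly the first $n+1$ upper records of an $\mathrm{Exp}(1)$ sample. Substituting $f(s)=1-F(s)=e^{-s}$ into \eqref{eq.fR} (equivalently, a one-step Jacobian in \eqref{eq.rfw}) shows that the joint density of $(T_0,\dots,T_n)$ is simply $e^{-t_n}$ on $0<t_0<\dots<t_n$. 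In these coordinates $V=2T_n$ and, using $\log R_i=\log\alpha+\tfrac1\beta\log T_i$ so that the factors of $\alpha,\beta$ cancel,
\[
U=2\beta\sum_{i=0}^{n}\log\frac{R_n}{R_i}=2\sum_{i=0}^{n-1}\log\frac{T_n}{T_i}=-2\sum_{i=0}^{n-1}\log\frac{T_i}{T_n},
\]
the $i=n$ term dropping out.

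Next I would change variables from $(T_0,\dots,T_n)$ to $(W_0,\dots,W_{n-1},T_n)$ with $W_i:=T_i/T_n\in(0,1)$, on the region $0<W_0<\dots<W_{n-1}<1$, $T_n>0$. Since $T_i=W_iT_n$ for $i<n$, the Jacobian is $T_n^{\,n}$, and the joint density becomes $T_n^{\,n}e^{-T_n}$, which factors on its support as a $\mathrm{Gamma}(n+1,1)$ density in $T_n$ times the constant $n!$ — the joint density of the order statistics of $n$ i.i.d.\ $\mathrm{Uniform}(0,1)$ variables in $(W_0,\dots,W_{n-1})$. Hence: (ii) $V=2T_n$ is $2$ times a $\mathrm{Gamma}(n+1,1)$ variable, i.e.\ $\chi^2_{2n+2}$; (i) $U=-2\sum_{i=0}^{n-1}\log W_i$ has the same law as $-2\sum_{k=1}^{n}\log U_k$ for i.i.d.\ $U_k\sim\mathrm{Uniform}(0,1)$ (a symmetric function of the order statistics equals the same function of the unordered sample), and since $-\log U_k\sim\mathrm{Exp}(1)$ this is $2$ times a $\mathrm{Gamma}(n,1)$ variable, i.e.\ $\chi^2_{2n}$; (iii) $U$ is a function of $(W_0,\dots,W_{n-1})$ alone and $V=2T_n$, and these are independent by the product form of the density.

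The changes of variables and the gamma-to-chi-square identifications are routine; the only step that needs care is the Jacobian computation together with the claim that the transformed density factors cleanly, i.e.\ that $T_n$ is independent of the normalized record vector $(T_0/T_n,\dots,T_{n-1}/T_n)$ with the latter distributed as uniform order statistics. This is precisely the classical structure of exponential records \citep{ar-ba-na-98}, so one may alternatively cite it, but the direct computation above is short and makes the independence in (iii) transparent.
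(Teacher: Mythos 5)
Your proof is correct, but it takes a genuinely different route from the paper's. The paper proves (i) by writing $U$ as a telescoping sum $\sum_{m=1}^{n}2\beta m\log(R_m/R_{m-1})$ and invoking the classical fact (Arnold et al., p.~20) that the adjacent ratios $Q_m=R_m/R_{m-1}$ are independent with $P(Q_m>q)=q^{-\beta m}$, so each term is $\chi^2_{(2)}$; it proves (ii) by the same reduction to exponential records you use, citing that the $n$th exponential record is $\mathrm{Gamma}(n+1,1)$; and it proves (iii) by Basu's theorem, noting that for fixed $\beta$ the statistic $R_n$ is complete sufficient for $\alpha$ while the $Q_m$'s are ancillary. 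You instead make a single change of variables $(T_0,\dots,T_n)\mapsto(T_0/T_n,\dots,T_{n-1}/T_n,T_n)$ on the exponential-record density $e^{-t_n}$, compute the Jacobian $t_n^{\,n}$, and read off all three claims from the resulting factorization into a $\mathrm{Gamma}(n+1,1)$ density in $T_n$ times the uniform order-statistics density in the normalized vector. Your computation checks out (the Jacobian, the support, the identification of $U$ as a symmetric function of the $W_i$'s, and the gamma-to-chi-square conversions are all right). What your approach buys is a self-contained, elementary derivation in which the independence in (iii) is visible directly from the product form of the density, rather than resting on Basu's theorem and a cited distributional fact about record ratios; what the paper's approach buys is brevity, since it leans on two standard results from the records literature and avoids any explicit multivariate change of variables.
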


\begin{proof}
\noindent i. Define
\begin{equation}\label{eq.Q}
Q_m=\frac{R_m}{R_{m-1}},\ \ \ m=1,2,\dots,n.
\end{equation}
From
\cite{ar-ba-na-98}
page  20, $Q_m$'s  are independent random variables with
\[P\left(Q_m>q\right)=q^{-\beta m},\ \ \ \ q>1,\]
and
\[2\beta m{\log  (Q_m)}=2\beta m{\log  (\frac{R_m}{R_{m-1}})}\sim {\chi }^2_{(2)}.\]
Therefore,
\begin{eqnarray*}
U&=&2\beta \sum^{n}_{i=0}{{\log(\frac{R_n}{R_i})}}
=2\beta \sum^{n-1}_{i=0}{{\log\left(\frac{R_n}{R_{n-1}}.\frac{R_{n-1}}{R_{n-2}}\dots \frac{R_{i+1}}{R_i}\right)}}\\
&=&2\beta \sum^{n-1}_{i=0}{\sum^n_{m=i+1}{{\log  (\frac{R_m}{R_{m-1}})}}}=2\beta \sum^n_{m=1}{\sum^{m-1}_{i=0}{{\log  (Q_m)}}}=\sum^n_{m=1}{2\beta m{\log(Q_m)}},
\end{eqnarray*}
has a chi-square distribution with $2n$ degrees of freedom.

\noindent ii. Define
\[Y={(\frac{X}{\alpha })}^{\beta },\]
where $X$ has a Weibull distribution with parameters $\alpha $ and $\beta $. Then, $Y$ has an exponential distribution with parameter one. Therefore, we can conclude that $V$ has a chi-square distribution with $2n+2$ degrees of freedom
\citep[see][page  9]{ar-ba-na-98}.

\noindent iii. Let $\beta $ be known. Then,  it can be concluded from \eqref{eq.rfw} that $R_n$  is a complete sufficient statistic for $\alpha$. Also, $Q_m$'s in \eqref{eq.Q} are ancillary statistics. Therefore, $R_n$ and $Q_m$'s are independent, and the proof is completed.
\end{proof}

\subsection{Inference on the shape parameter }
Here, we consider inference on the shape parameter, $\beta$ from a Weibull distribution based on record values, and propose a simple and exact method for constructing a confidence interval and testing the one-sided hypotheses
\begin{equation}\label{eq.H0b1}
H_0:\beta \leq{\beta }_0\ \ \ \ vs.\ \ \ \ \ H_1:\beta >{\beta }_0,
\end{equation}
and the two-sided hypotheses
\begin{equation}\label{eq.H0b2}
H_0:\beta ={\beta }_0\ \ \ \ vs.\ \ \ \ \ H_1:\beta \neq{\beta }_0,
\end{equation}
where ${\beta }_0$ is a specified value.

Based on Theorem \ref{thm.1}, $U=2\beta \sum^n_{i=0}{{\log  \left(\frac{R_n}{R_i}\right)}}$ has a chi-square distribution with $2n$ degrees of freedom. Therefore, a $100\left(1-\gamma\right)\%$  confidence interval for $\beta $ can be obtained as
\begin{equation}\label{eq.cib}
\left(\frac{{\chi}^2_{\left(2n\right),\gamma /2}}{2\sum^n_{i=0}{{\log  \left(\frac{R_n}{R_i}\right)}}}, \frac{{\chi }^2_{\left(2n\right),1-\gamma/2}}{2\sum^n_{i=0}{{\log  \left(\frac{R_n}{R_i}\right)}}}\right),
\end{equation}
where ${\chi }^2_{\left(k\right),\gamma }$ is the $\gamma $th percentile of the chi-square distribution with $k$ degrees of freedom.
Also, for testing the hypotheses in \eqref{eq.H0b1} and \eqref{eq.H0b2}, we can define the test statistic
\[U_0=2{\beta }_0\sum^n_{i=0}{{\log\left(\frac{R_n}{R_i}\right)}},\]
and  the null hypothesis in \eqref{eq.H0b1} is rejected at nominal level $\gamma$ if
\[U_0>{\chi }^2_{\left(2n\right),1-\gamma },\]
and the null hypothesis in \eqref{eq.H0b2} is rejected if
\[U_0<{\chi }^2_{\left(2n\right),\gamma /2}\ \ \ \ \ {\rm or}\ \ \ \ \ U_0>{\chi }^2_{\left(2n\right),1-\gamma /2}.\]

\bigskip
\cite{wu-ts-06}
proposed the random variable
\[W\left(\beta \right)=\frac{\sum^n_{i=0}{R^{\beta }_i\ }}{(n+1){\left(\prod^n_{i=0}{R_i}\right)}^{\frac{\beta }{n+1}}},\]
for inference about the shape parameter, and showed that $W(\beta )$ is an increasing function with respect to $\beta $.  Also, its distribution does not depend on the parameters $\alpha $ and $\beta $. In fact, $W(\beta )$ is distributed as
\[W^*=\frac{\sum^n_{i=0}{R^*_i\ }}{(n+1){\left(\prod^n_{i=0}{R^*_i}\right)}^{\frac{1}{n+1}}},\]
where $R^*_i\ $is the $i$th record from the exponential distribution with parameter one. However, its exact distribution is very complicated, and
\cite{wu-ts-06}
obtained the percentiles of $W\left(\beta \right)$ using Monte Carlo simulation. The confidence limits for $\beta$ are obtained by solving the following equations numerically as
\begin{equation}\label{eq.wu}
W\left(\beta \right)=W^*_{1-\gamma /2},\ \ \ \ \ \ \ \ W\left(\beta \right)=W^*_{\gamma /2},
\end{equation}
where $W^*_{\delta }$ is the $\delta $th percentile of the distribution of $W^*$.

\subsection{Inference on the scale parameter}

Here, we consider inference about the scale parameter, $\alpha $ for a Weibull distribution based on record values, and propose an approach for constructing a confidence interval and testing the one-sided hypotheses
\begin{equation}\label{eq.H0a1}
H_0:\alpha \leq{\alpha }_0\ \ \ \ {\rm vs}.\ \ \ \ \ H_1:\alpha >{\alpha }_0,
\end{equation}
and the two-sided hypotheses
\begin{equation}\label{eq.H0a2}
H_0:\alpha ={\alpha }_0\ \ \ \ {\rm vs}.\ \ \ \ \ H_1:\alpha \neq{\alpha }_0,
\end{equation}
where ${\alpha }_0$ is a specified value.

We did not find any approach in literature for inference about $\alpha$ based on record values when the shape parameter is unknown. Here, we use the concepts of generalized {\it p}-value and generalized confidence interval 
 introduced by
\cite{ts-we-89},
and
\cite{weerahandi-93}, respectively.
In appendix, we briefly review these concepts, and  
 refer  readers to
\cite{weerahandi-95} for more details.

Let
\begin{equation}\label{eq.gpt}
T=r_n{(\frac{2}{V})}^{\frac{2 C_r}{U}}=r_n{(\frac{\alpha }{R_n})}^{\frac{C_r}{\sum^n_{i=0}\log  (\frac{R_n}{R_i})}},
\end{equation}
where $C_r=\sum^n_{i=0}{{\log\left(\frac{r_n}{r_i}\right)}}$, and $r_i$, $i=0,1,\dots, n$ is the observed value of $R_i$, $i=0,1,\dots,n$, and $U$ and $V$ are independent random variables that are defined in Theorem \ref{thm.1}. The observed value of $T$ is $\alpha $, and distribution of $T$ does not depend on unknown parameters $\alpha $ and $\beta $. Therefore, $T$ is a generalized pivotal variable for $\alpha $, and can be used to construct a generalized confidence interval for $\alpha $.

Let
\[T^*=T-\alpha =r_n{(\frac{2}{V})}^{\frac{2 C_r}{U}}-\alpha.\]
Then, $T^*$ is a generalized test variable for $\alpha $, because i) the observed value of $T^*$ does not depend on any parameters, ii) the distribution function of $T^*$ is free from nuisance parameters and only depends on the parameter $\alpha $, and iii) the distribution function of $T^*$ is an increasing function with respect to the parameter $\alpha $, and so, the distribution of $T^*$ is stochastically decreasing in $\alpha $. Therefore, the generalized {\it p}-value for testing the hypotheses in \eqref{eq.H0a1} is given as
\begin{equation}\label{eq.pvg1}
p=P\left(T^*<0|H_0\right)=P\left(T<\alpha_0\right),
\end{equation}
and the generalized {\it p}-value for testing the hypotheses in \eqref{eq.H0a2} is given as
\begin{equation}\label{eq.pvg2}
p=2 \ {\min  \left\{P\left(T>\alpha_0\right),P\left(T<\alpha_0\right)\right\}}.
\end{equation}

The generalized confidence interval for $\alpha$ based on $T$, and the generalized {\it p}-values in \eqref{eq.pvg1} and \eqref{eq.pvg2} can be computed using Monte Carlo simulation  \citep{weerahandi-95,be-ja-06-ge} based on the following algorithm:

\begin{algorithm}\label{alg.1}
For given $r_0,r_1,\dots,r_n$,
\begin{enumerate}
\item  Generate $U\sim {\chi }^2_{(2n)}$ and $V\sim {\chi }^2_{(2n+2)}$.
\item  Compute $T$ in \eqref{eq.gpt}.
\item  Repeat steps 1 and 2 for a large number times, (say $M=10000$), and obtain the values $T_1,\dots,T_M$.
\item  Set $D_l=1$ if $T_l<\alpha_0$ else $D_l=0$, $l=1,...,M$.
\end{enumerate}

The $100\left(1-\gamma \right)\%$ generalized confidence interval for $\alpha $ is $\left[T_{\left(\gamma /2\right)},T_{\left(1-\gamma /2\right)}\right]$, where $T_{\left(\delta \right)}$ is the $\delta $th percentile of $T_l$'s. Also, the generalized {\it p}-value for testing the one-sided hypotheses in \eqref{eq.pvg1} is obtained by $\frac{1}{M}\sum^M_{l=1}{D_l}$.

\end{algorithm}

\subsection{Real example}
\label{sec.ex}
\cite{roberts-79}
gave monthly and annual maximal of one-hour mean concentration of sulfur dioxide (in pphm) from Long Beach, California, for 1956 to 1974.
\cite{chan-98}
showed that the Weibull distribution is a reasonable model for this data set.
\cite{wu-ts-06}
 also study this data set. The upper record values for the month of October from the data are
\begin{center}
26, 27, 40, 41.
\end{center}

The 95\% confidence interval  for the scale parameter $\alpha $ based on our generalized confidence interval with $M=10000$ is obtained as $(5.4869, 39.9734)$. The 95\% confidence interval for the shape parameter $\beta$  in \eqref{eq.cib} is obtained as $(0.6890, 8.0462)$, and based on Wu and Tseng's method in \eqref{eq.wu} is obtained as $(0.6352,7.7423)$. Also, the generalized p-value equals to 0.0227 for testing the hypotheses in \eqref{eq.H0a1} with $\alpha_0=5$. Therefore, the null hypothesis is rejected.

\begin{table}[ht]
\begin{center}
\caption{ Empirical coverage probabilities and expected lengths of the generalized confidence interval for the parameter $\alpha$  with confidence level 0.95.} \label{tab.a}

\begin{tabular}{|c|c|c|ccccccc|} \hline
&  &              &    \multicolumn{7}{|c|}{$\beta $} \\ \hline
           & $\alpha$  & $n$  & 0.5 & 1.0 & 1.2 & 1.5 & 2.0 & 3.0 & 5.0 \\ \hline
Empirical  & 1.0 & 3 &  0.951  & 0.949 & 0.953 & 0.947 & 0.947 & 0.948 & 0.948 \\
Coverage   &     & 7 &  0.952  & 0.949 & 0.950 & 0.950 & 0.951 & 0.953 & 0.952 \\
           &     & 9 &  0.951  & 0.948 & 0.953 & 0.951 & 0.948 & 0.949 & 0.950 \\
           &     &14 &  0.945  & 0.949 & 0.950 & 0.950 & 0.954 & 0.952 & 0.952 \\ \cline{2-10}
           & 2.0 & 3 &  0.949  & 0.952 & 0.947 & 0.949 & 0.951 & 0.950 & 0.953  \\
           &     & 7 &  0.948  & 0.953 & 0.950 & 0.946 & 0.954 & 0.948 & 0.951 \\
           &     & 9 &  0.952 & 0.948 & 0.953 & 0.950 & 0.952 & 0.953 & 0.954  \\
           &     &14 & 0.950 & 0.946 & 0.949 & 0.951 & 0.951 & 0.952 & 0.955    \\  \hline \hline
 Expected  & 1.0 & 3 &  16.740 & 3.581 & 2.804 & 2.155 & 1.653 & 1.211 & 0.847 \\
Length     &     & 7 &  13.575 & 3.198 & 2.477 & 1.942 & 1.475 & 1.041 & 0.686 \\
           &     & 9 & 13.505 & 3.138 & 2.469 & 1.918 & 1.446 & 1.008 & 0.651 \\
           &     &14 & 13.122 & 3.082 & 2.403 & 1.854 & 1.376 & 0.943 & 0.596 \\ \cline{2-10}
           & 2.0 & 3 & 33.516 & 7.187 & 5.579 & 4.341 & 3.323 & 2.427 & 1.704 \\
           &     & 7 & 27.960 & 6.344 & 4.999 & 3.899 & 2.960 & 2.080 & 1.364 \\
           &     & 9 & 27.342 & 6.304 & 4.935 & 3.831 & 2.890 & 2.016 & 1.302 \\
           &     &14 & 26.626 & 6.129 & 4.779 & 3.705 & 2.757 & 1.886 & 1.191  \\ \hline
\end{tabular}
\end{center}
\end{table}

\subsection{Simulation study}
\label{sec.sim1}
We performed a simulation study in order to evaluate the accuracy of proposed methods for constructing confidence interval for the parameters of Weibull distribution. For this purpose, we generated  $n+1$ record values from a Weibull distribution, and   considered $\alpha =1, 2$. For the simulation with 10000 runs and  different values of the shape parameter $\beta$, the empirical coverage probabilities and expected lengths of the methods with the confidence coefficient 0.95 were obtained.
The results of our generalized confidence interval for inference on $\alpha $ using the algorithm \ref{alg.1}  with $M=10000$  are presented in Table \ref{tab.a}, and the results of our exact method (E) and the Wu method (W) for inference on $\beta $  are given in Table \ref{tab.b}.
We can conclude that

\begin{enumerate}
  \item [i.] The empirical coverage probabilities of all methods are close to the confidence level 0.95.
  \item [ii.] The expected lengths of E and W increase when the parameter $\beta $ increases. Additionally, the expected length of E is smaller than W especially when $\beta$ is large.
  \item [iii.] The expected length of our generalized confidence interval for $\alpha $ decreases when the parameter $\beta $ increases. Moreover, it is very large when $\beta$ is small.

     \item [iv.] The expected lengths of all methods decrease when the number of records increases.

     \item [v.]  The empirical coverage probabilities and expected lengths of W and E do not change when the parameter $\alpha$ changes.

\end{enumerate}

\begin{table}
\begin{center}
\caption{ Empirical coverage probabilities and expected lengths of the methods for constructing confidence interval for the parameter $\beta $ with confidence level 0.95} \label{tab.b}

\begin{tabular}{|c|c|c|c|ccccccc|} \hline
&  &     &         &    \multicolumn{7}{|c|}{$\beta $} \\ \hline
& $\alpha$  & $n$ & Method & 0.5 & 1.0 & 1.2 & 1.5 & 2.0 & 3.0 & 5.0 \\ \hline
Empirical& 1.0 & 3 & W & 0.950 & 0.952 & 0.952 & 0.949 & 0.949 & 0.953 & 0.947 \\
Coverage &     &   & E & 0.949 & 0.953 & 0.953 & 0.950 & 0.948 & 0.953 & 0.946 \\ \cline{3-11}
         &     & 7 & W & 0.951 & 0.949 & 0.950 & 0.948 & 0.949 & 0.949 & 0.953 \\
         &     &   & E & 0.951 & 0.950 & 0.948 & 0.950 & 0.953 & 0.953 & 0.950 \\ \cline{3-11}
         &     & 9 & W & 0.949 & 0.949 & 0.945 & 0.949 & 0.947 & 0.949 & 0.950 \\
         &     &   & E & 0.948 & 0.950 & 0.948 & 0.948 & 0.949 & 0.952 & 0.949 \\ \cline{3-11}
         &     &14 & W & 0.946 & 0.949 & 0.951 & 0.951 & 0.953 & 0.949 & 0.951 \\
         &     &   & E & 0.947 & 0.948 & 0.950 & 0.951 & 0.953 & 0.952 & 0.952 \\ \cline{2-11}

         & 2.0 & 3 & W & 0.954 & 0.955 & 0.948 & 0.950 & 0.950 & 0.952 & 0.949 \\
         &     &   & E & 0.953 & 0.953 & 0.947 & 0.949 & 0.949 & 0.952 & 0.950 \\ \cline{3-11}
         &     & 7 & W & 0.950 & 0.955 & 0.947 & 0.948 & 0.952 & 0.947 & 0.948 \\
         &     &   & E & 0.949 & 0.952 & 0.951 & 0.948 & 0.952 & 0.947 & 0.950 \\ \cline{3-11}
         &     & 9 & W & 0.953 & 0.948 & 0.956 & 0.950 & 0.953 & 0.951 & 0.952 \\
         &     &   & E & 0.952 & 0.948 & 0.951 & 0.951 & 0.953 & 0.951 & 0.953 \\ \cline{3-11}
         &     &14 & W & 0.948 & 0.947 & 0.949 & 0.950 & 0.951 & 0.951 & 0.952 \\
         &     &   & E & 0.950 & 0.947 & 0.949 & 0.949 & 0.953 & 0.951 & 0.955 \\  \hline \hline

Expected & 1.0 & 3 & W & 1.704 & 3.431 & 4.194 & 5.279 & 6.913 & 10.396 & 17.479 \\
Length   &     &   & E & 1.630 & 3.285 & 4.013 & 5.041 & 6.611 & 9.937 & 16.722 \\ \cline{3-11}
         &     & 7 & W & 0.932 & 1.879 & 2.224 & 2.808 & 3.752 & 5.578 & 9.276 \\
         &     &   & E & 0.853 & 1.716 & 2.038 & 2.574 & 3.437 & 5.115 & 8.499 \\ \cline{3-11}
         &     & 9 & W & 0.806 & 1.603 & 1.928 & 2.412 & 3.222 & 4.797 & 8.024 \\
         &     &   & E & 0.730 & 1.450 & 1.748 & 2.185 & 2.928 & 4.352 & 7.267 \\ \cline{3-11}
         &     &14 & W & 0.625 & 1.262 & 1.509 & 1.888 & 2.505 & 3.766 & 6.263 \\
         &     &   & E & 0.558 & 1.125 & 1.343 & 1.685 & 2.236 & 3.353 & 5.590 \\ \cline{2-11}

         & 2.0 & 3 & W & 1.713 & 3.458 & 4.156 & 5.277 & 6.856 & 10.307 & 16.998 \\
         &     &   & E & 1.638 & 3.306 & 3.967 & 5.053 & 6.560 & 9.859 & 16.266 \\ \cline{3-11}
         &     & 7 & W & 0.934 & 1.866 & 2.208 & 2.822 & 3.738 & 5.629 & 9.392 \\
         &     &   & E & 0.854 & 1.710 & 2.026 & 2.589 & 3.419 & 5.151 & 8.581 \\ \cline{3-11}
         &     & 9 & W & 0.808 & 1.600 & 1.923 & 2.418 & 3.189 & 4.798 & 8.032 \\
         &     &   & E & 0.733 & 1.451 & 1.743 & 2.193 & 2.890 & 4.347 & 7.276 \\ \cline{3-11}
         &     &   & W & 0.628 & 1.260 & 1.496 & 1.888 & 2.523 & 3.768 & 6.302 \\
         &     &   & E & 0.560 & 1.124 & 1.338 & 1.688 & 2.250 & 3.366 & 5.624 \\  \hline

\end{tabular}
\end{center}
\end{table}

\section{Joint confidence regions for the parameters}
\label{sec.joint}
Suppose $R_0,R_1,\dots,R_n$ are the first $n+1$ upper record values from a Weibull distribution with parameters $\alpha$ and $\beta$. In this section, we presented a joint confidence region for the parameters $\alpha $ and $\beta $. This is important because it can be used to find confidence bounds for any function of the parameters such as the reliability function $R\left(t\right)=\exp  (-(\frac{t}{\alpha })^{\beta })$. For more  references about the  joint confidence region based on records, reader can see \cite{as-ab-11-confidence,as-ab-11} and \cite{as-ab-kus-11}.

\subsection{ Asgharzadeh and Abdi method}

 \cite{as-ab-11}
 present exact joint confidence regions for the parameters of Weibull distribution based on the record values using the idea presented by
\cite{wu-ts-06}.
The following inequalities determine $100\left(1-\gamma \right)\%$ joint confidence regions for $\alpha $ and $\beta$:
\begin{equation}\label{joint.Aj}
A_j=\left\{ \begin{array}{l}
\dfrac{{\log  \left(\left(\frac{n-j+1}{j}\right)k_1+1\right)}}{{\log  \left(\frac{R_n}{R_{j-1}}\right)}}<\beta <\dfrac{{\log  \left(\left(\frac{n-j+1}{j}\right)k_2+1\right)}}{{\log \left(\frac{R_n}{R_{j-1}}\right)}} \\
R_n{\left(\dfrac{2}{{\chi }^2_{\left(2n+2\right),(1+\sqrt{1-\gamma })/2}}\right)}^{\frac{1}{\beta }}<\alpha <R_n{\left(\dfrac{2}{{\chi }^2_{\left(2n+2\right),(1-\sqrt{1-\gamma })/2}}\right)}^{\frac{1}{\beta }}, \end{array}
\right.
\end{equation}
for $j=1,\dots,n$, where
$$
k_1=F_{\left(2n-2j+2,2j\right),(1-\sqrt{1-\gamma })/2} \ \ \ \ k_2=F_{\left(2n-2j+2,2j\right),(1+\sqrt{1-\gamma })/2},
$$
and $F_{\left(a,b\right),\gamma }$ is the $\gamma $th percentile of the F distribution with $a$  and $b$ degrees of freedom. Note that for each $j$, we have a joint confidence region for $\alpha $ and $\beta $.
\cite{as-ab-11} found that in most cases $A_{ \lfloor \frac{n+1}{5}\rfloor}$ and $A_{{ \lfloor \frac{n+1}{5}+1\rfloor}}$ provide the smallest confidence areas, where $\lfloor x \rfloor$ is the largest integer value smaller than $x$.

\subsection{A new joint confidence region}

From Theorem \ref{thm.1},  $U=2\beta\sum^n_{i=0}{{\log  \left(\frac{R_n}{R_i}\right)}}$ has a chi-square distribution with $2n$ degrees of freedom and $V=2{\left(\frac{R_n}{\alpha }\right)}^{\beta }$ has a chi-square distribution with $2n+2$ degrees of freedom, and $U$ and $V$ are independent. Therefore, an exact joint confidence region for the parameters $\alpha $ and $\beta $ of Weibull distribution based on the record values can be given as
\begin{equation}\label{joint.B}
B=\left\{ \begin{array}{l}
\dfrac{{\chi }^2_{\left(2n\right),(1-\sqrt{1-\gamma })/2}}{2\sum^n_{i=0}{{\log  \left(\frac{R_n}{R_i}\right)}}}<\beta <\dfrac{{\chi }^2_{\left(2n\right),(1+\sqrt{1-\gamma })/2}}{2\sum^n_{i=0}{{\log\left(\frac{R_n}{R_i}\right)}}} \\
R_n{\left(\dfrac{2}{{\chi }^2_{\left(2n+2\right),(1+\sqrt{1-\gamma })/2}}\right)}^{\frac{1}{\beta }}<\alpha <R_n{\left(\dfrac{2}{{\chi }^2_{\left(2n+2\right),(1-\sqrt{1-\gamma })/2}}\right)}^{\frac{1}{\beta }}. \end{array}
\right.
\end{equation}

\bigskip
\begin{remark}
All record values are used in the proposed joint confidence region in \eqref{joint.B} but not in the proposed joint confidence regions in \eqref{joint.Aj}.
\end{remark}

\subsection{ Real example}

Here, we consider the upper record values in the  example  given  in Section \ref{sec.ex}. Therefore, the 95\% joint confidence regions for $\alpha $ and $\beta $ based on
\cite{as-ab-11}
 in \eqref{joint.Aj} are
\begin{eqnarray*}
&&
A_1=\left\{\left(\alpha ,\beta \right): 0.5826<\beta <\ 11.9955,\ \ 41{\left(0.1029\right)}^{\frac{1}{\beta }}<\alpha <41{\left(1.1318\right)}^{\frac{1}{\beta }}\right\}
\\
&&A_2=\left\{\left(\alpha ,\beta \right): 0.1646<\beta <\ 6.4905,\ \ \  41{\left(0.1029\right)}^{\frac{1}{\beta }}<\alpha <41{\left(1.1318\right)}^{\frac{1}{\beta }}\right\}
\\
&&
A_3=\left\{\left(\alpha ,\beta \right): 0.1720<\beta <\ 58.9824,\ \ 41{\left(0.1029\right)}^{\frac{1}{\beta }}<\alpha <41{\left(1.1318\right)}^{\frac{1}{\beta }}\right\}
\end{eqnarray*}
and the 95\% joint confidence region for $\alpha $ and $\beta $ in \eqref{joint.B} is
\[B=\left\{\left(\alpha ,\beta \right): 0.5305<\beta <\ 9.0277,\ \ 41{\left(0.1029\right)}^{\frac{1}{\beta }}<\alpha <41{\left(1.1318\right)}^{\frac{1}{\beta }}\right\}.\]

The plot of all joint confidence regions are given in Figure  \ref{fig.joint}. Also, the area of the joint confidence regions $A_1$, $A_2$, $A_3$, and $B$ are 194.9723, 166.7113, 369.7654, and 172.5757, respectively.

\begin{figure}[ht]
\centering
\includegraphics[scale=0.5]{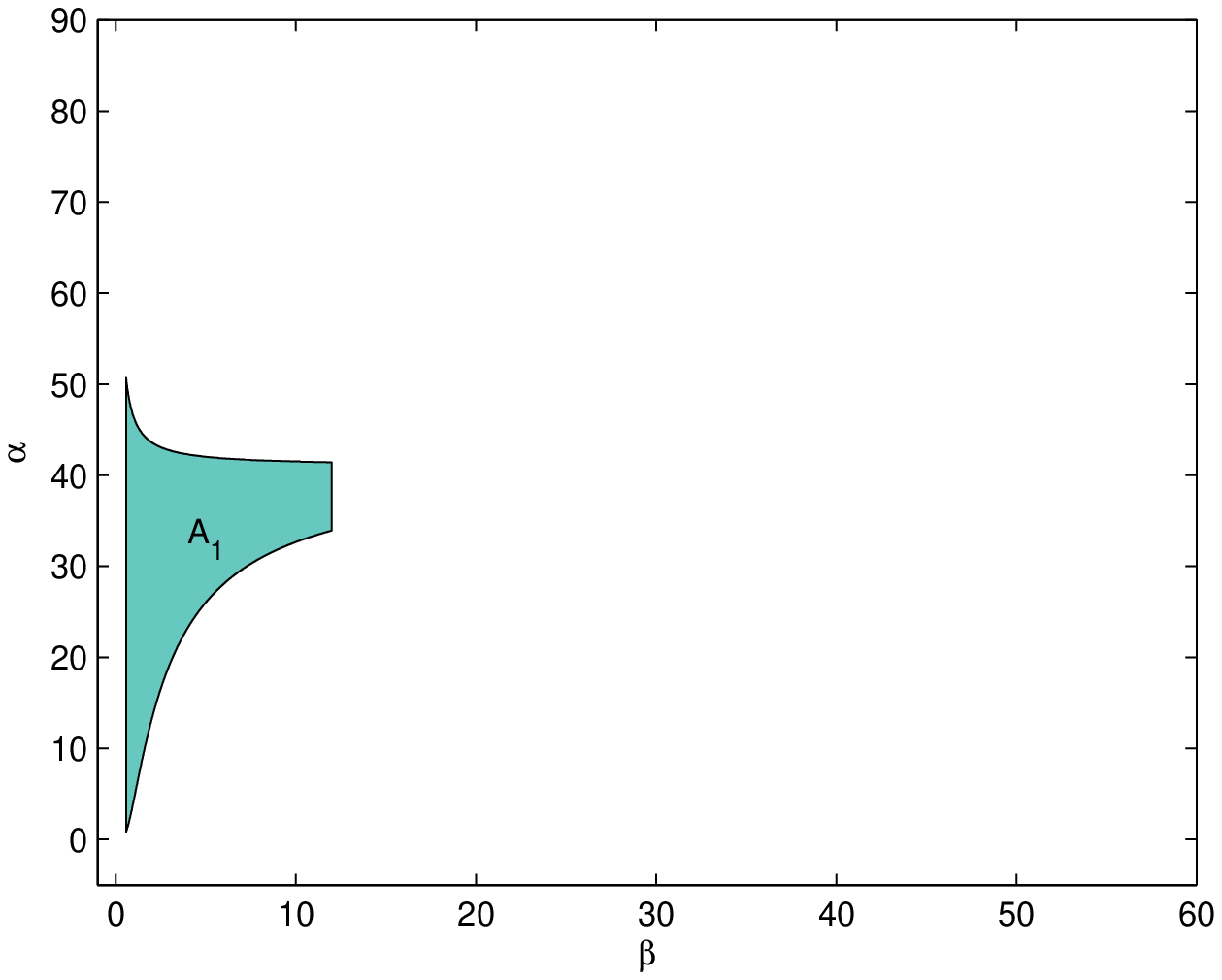}
\includegraphics[scale=0.5]{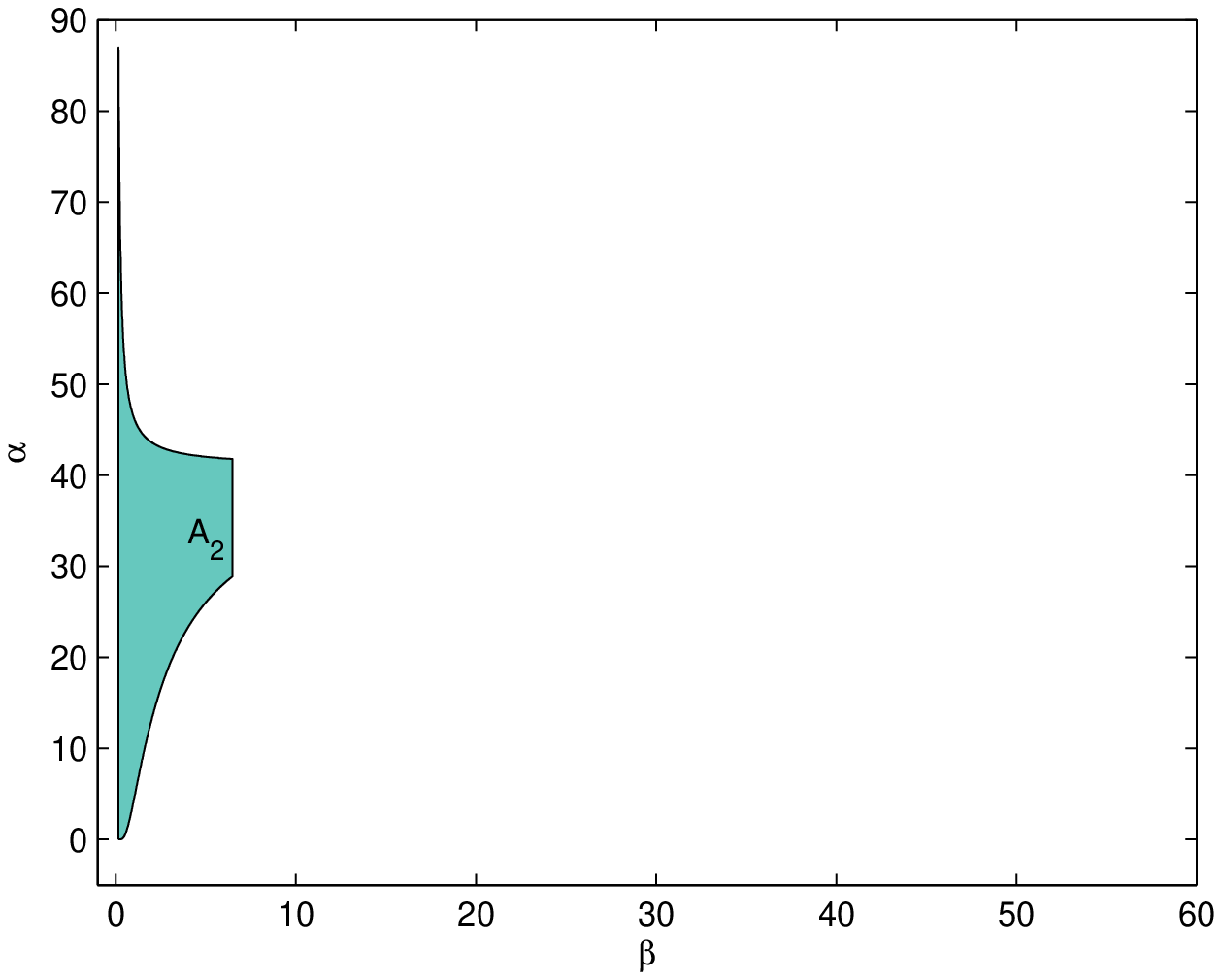}
\includegraphics[scale=0.5]{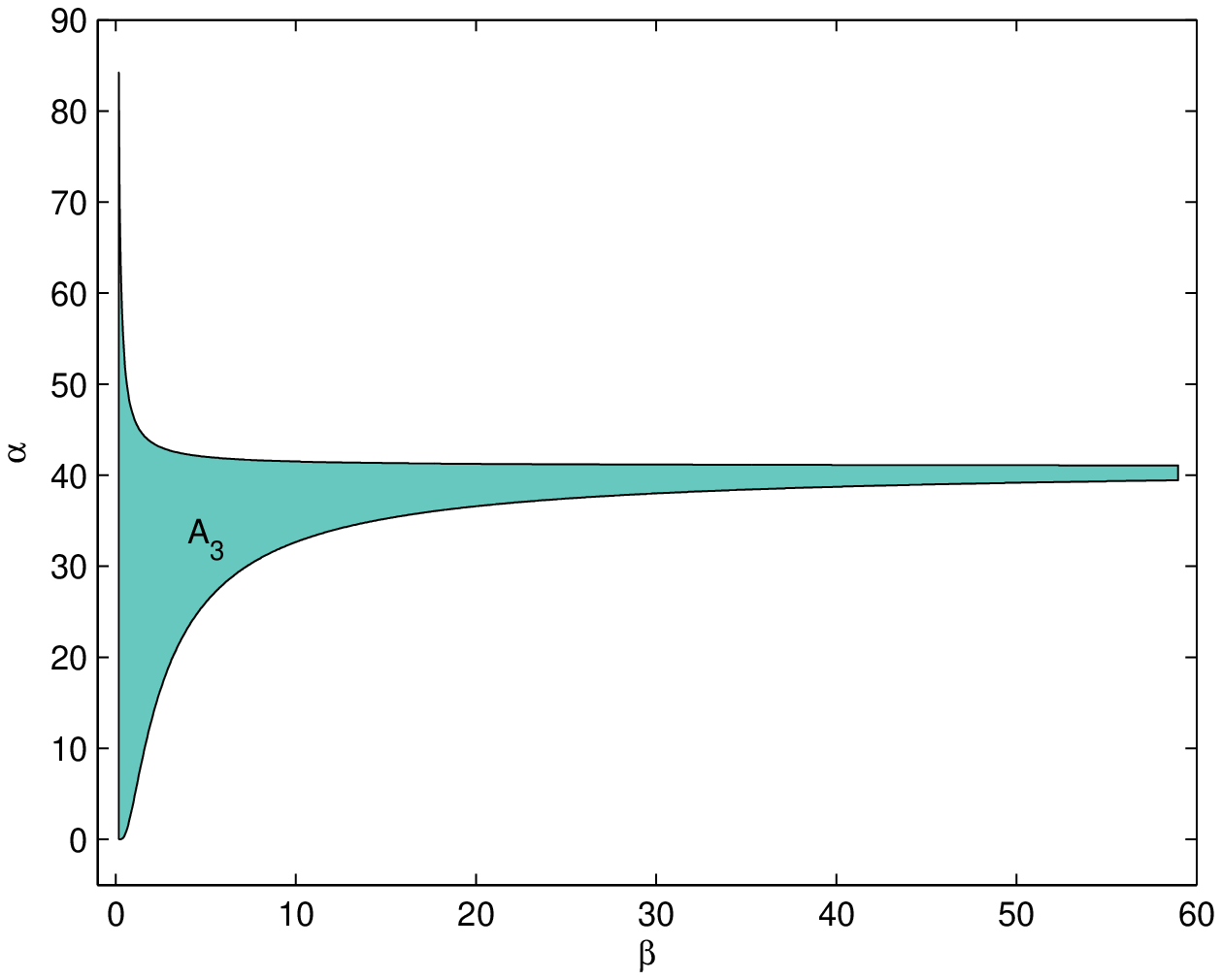}
\includegraphics[scale=0.5]{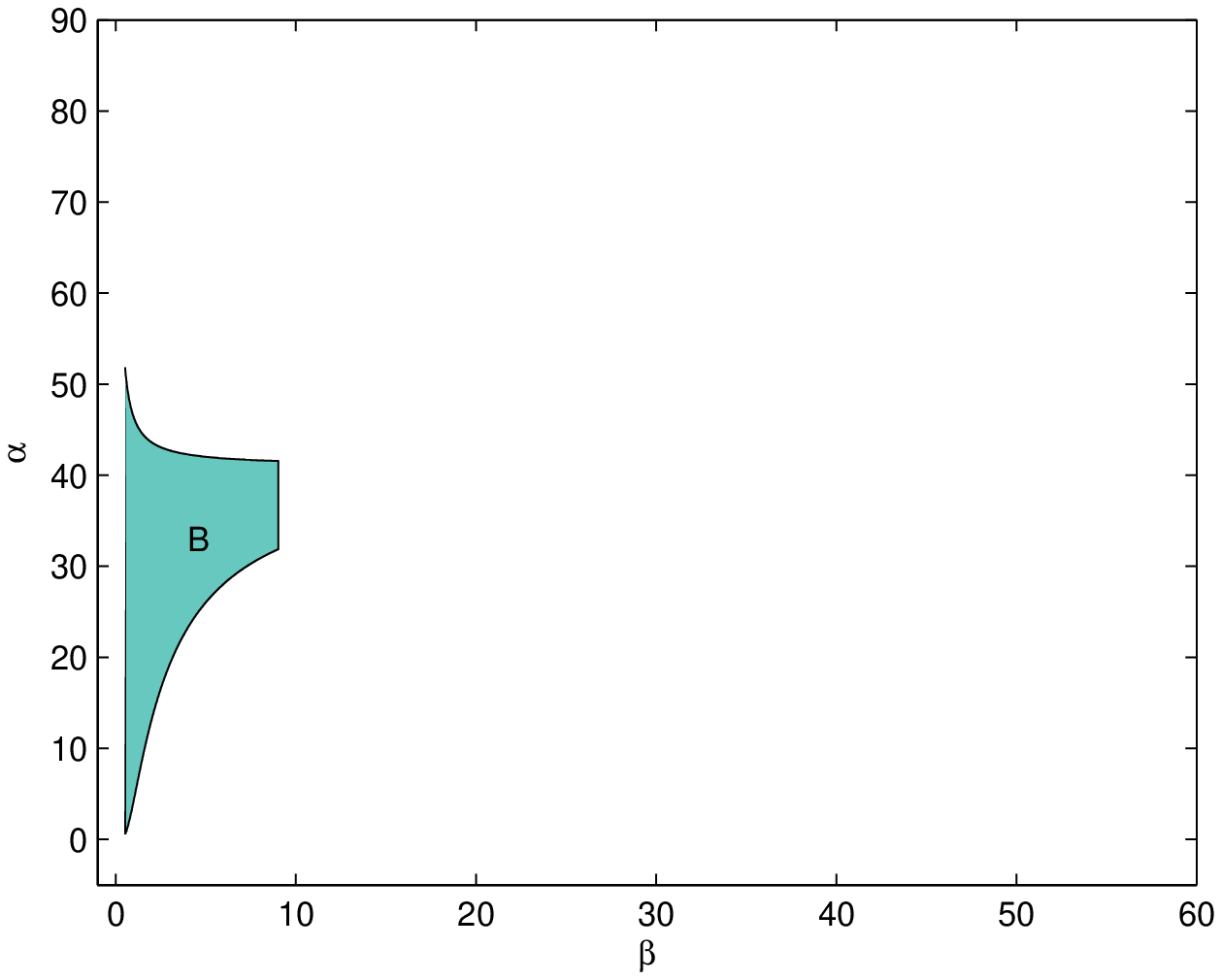}
\vspace{-0.4cm}
\caption[]{The plot of the joint confidence regions $A_1$, $A_2$, $A_3$, and $B$. } \label{fig.joint}
\end{figure}

\newpage

\begin{table}
\begin{center}
\caption{ Empirical coverage probabilities of the methods for constructing joint confidence region for the parameters $\alpha $ and $\beta $ with $\gamma =0.05$.} \label{tab.ecjoint}

\begin{tabular}{|c|c|c|ccccccc|} \hline
& &  &\multicolumn{7}{|c|}{$\beta $}      \\ \hline
& $n$ & Region & 0.5 & 1.0 & 1.2 & 1.5 & 2.0 & 3.0 & 5.0 \\ \hline
Coverage &4 & $A_1$ & 0.950 & 0.949 & 0.949 & 0.951 & 0.954 & 0.946 & 0.950 \\
Probability&  & $A_2$ & 0.949 & 0.951 & 0.950 & 0.951 & 0.953 & 0.949 & 0.952 \\
 && $B$  & 0.949 & 0.950 & 0.949 & 0.952 & 0.954 & 0.949 & 0.950 \\ \cline{2-10}
&6 & $A_1$ & 0.951 & 0.951 & 0.952 & 0.952 & 0.954 & 0.949 & 0.949 \\
& & $A_2$ & 0.952 & 0.948 & 0.950 & 0.948 & 0.953 & 0.948 & 0.952 \\
& & $B$ & 0.953 & 0.949 & 0.951 & 0.951 & 0.953 & 0.950 & 0.951 \\ \cline{2-10}
&9 
 & $A_2$ & 0.953 & 0.949 & 0.950 & 0.955 & 0.949 & 0.948 & 0.953 \\
& & $A_3$ & 0.951 & 0.951 & 0.951 & 0.956 & 0.949 & 0.948 & 0.949 \\
& & $B$ & 0.950 & 0.952 & 0.951 & 0.953 & 0.949 & 0.948 & 0.952 \\ \cline{2-10}
&14
 & $A_3$ & 0.947 & 0.950 & 0.954 & 0.948 & 0.954 & 0.951 & 0.952 \\
& & $A_4$ & 0.946 & 0.951 & 0.952 & 0.948 & 0.951 & 0.950 & 0.952 \\
& & $B$ & 0.948 & 0.949 & 0.952 & 0.947 & 0.951 & 0.953 & 0.952 \\\cline{2-10}
& 29& $A_6$ & 0.951 & 0.953 & 0.950 & 0.950 & 0.948 & 0.948 & 0.950 \\
&   & $A_7$ &0.950 & 0.952 & 0.951 & 0.949 & 0.948 & 0.948 & 0.950 \\
&   & $B$ &0.953 & 0.955 & 0.951 & 0.953 & 0.950 & 0.948 & 0.952 \\ \hline \hline
%
%
Expected &4 & $A_1$ & 27.787 & 8.548 & 7.339 & 6.331 & 5.725 & 5.330 & 5.203 \\
Area& & $A_2$ & 30.020 & 8.976 & 7.651 & 6.682 & 5.989 & 5.593 & 5.504 \\
& & $B$  & 22.985 & 7.371 & 6.388 & 5.596 & 5.099 & 4.792 & 4.713 \\ \cline{2-10}
&6 & $A_1$ & 21.062 & 6.036 & 5.213 & 4.576 & 4.102 & 3.783 & 3.701 \\
& & $A_2$ & 20.035 & 5.824 & 5.046 & 4.436 & 3.985 & 3.701 & 3.648 \\
& & $B$ & 14.551 & 4.714 & 4.144 & 3.714 & 3.399 & 3.192 & 3.162 \\ \cline{2-10}
&9
 & $A_2$ & 14.631 & 4.081 & 3.533 & 3.059 & 2.756 & 2.574 & 2.510 \\
& & $A_3$ & 14.651 & 4.086 & 3.545 & 3.077 & 2.767 & 2.585 & 2.541 \\
& & $B$ & 9.639 & 3.137 & 2.774 & 2.471 & 2.275 & 2.160 & 2.133 \\ \cline{2-10}
&14
 & $A_3$ & 9.436 & 2.702 & 2.320 & 2.035 & 1.816 & 1.701 & 1.661 \\
& & $A_4$ & 9.388 & 2.686 & 2.304 & 2.035 & 1.812 & 1.707 & 1.668 \\
&    & $B$   & 5.784 & 1.999 & 1.763 & 1.599 & 1.471 & 1.405 & 1.385 \\ \cline{2-10}
& 29 & $A_6$ & 4.244 & 1.298 & 1.124 & 0.988 & 0.905 & 0.849 & 0.828 \\
&    & $A_7$ &4.202 & 1.291 & 1.118 & 0.985 & 0.904 & 0.850 & 0.828 \\
&    & $B$   &2.380 & 0.932 & 0.838 & 0.761 & 0.719 & 0.689 & 0.678 \\ \hline
\end{tabular}
\end{center}
\end{table}

\subsection{Simulation study}

We performed a similar simulation given in Section \ref{sec.sim1} with  considering $\alpha =1$, in order to compare the joint confidence regions proposed by
\cite{as-ab-11}
and our joint confidence region ($B$) in \eqref{joint.B}.
Here, we consider the confidence areas $A_{\lfloor \frac{n+1}{5}\rfloor}$ and $A_{{\lfloor \frac{n+1}{5}+1\rfloor}}$ because the coverage probabilities of all $A_i$'s are close to the confidence coefficient and \cite{as-ab-11} found that in most cases these two confidence areas provide the smallest confidence areas.
The empirical coverage probabilities and expected areas of the methods for the confidence coefficient 95\% are given in Table \ref{tab.ecjoint}. We can conclude that

\begin{enumerate}
  \item The coverage probabilities of the all methods are close to the confidence coefficient 0.95.
  \item The expected area of our method is smaller than the expected areas of the proposed methods by
  \cite{as-ab-11}.
  \item The expected areas of all methods decrease when the number of records increases.
  \item The expected areas of all methods decrease when the parameter $\beta $ increases.
\end{enumerate}

\section*{Appendix. Generalized {\it p}-value and generalized confidence interval}

Let ${\boldsymbol X}$ be a random variable whose distribution depends on a parameter of interest $\theta $, and a nuisance parameter $\lambda $. Let ${\boldsymbol x}$ denote the observed value of ${\boldsymbol X}$. A generalized pivotal quantity for $\theta $ is a random quantity denoted by $T({\boldsymbol X};{\boldsymbol x};\theta )$ that satisfies the following conditions:

\noindent (i) The distribution of $T({\boldsymbol X};{\boldsymbol x};\theta )$ is free of any unknown parameters.

\noindent (ii) The value of $T({\boldsymbol X};{\boldsymbol x};\theta )$ at ${\boldsymbol X}\ =\ {\boldsymbol x}$, i.e., $T({\boldsymbol x};{\boldsymbol x};\theta)$ is free of the nuisance parameter $\lambda $.

Appropriate percentiles of $T({\boldsymbol X};{\boldsymbol x};\theta )$ form a confidence interval for $\theta $. Specifically, if $T({\boldsymbol x};{\boldsymbol x};\theta)=\theta $, and  $T_{\delta }$ denotes the 100$\delta $ percentage point of $T({\boldsymbol X};{\boldsymbol x};\theta )$ then $(T_{\gamma /2},\ T_{1-\gamma /2})$ is a $1-\gamma $ generalized confidence interval for $\theta $. The percentiles can be found because, for a given ${\boldsymbol x}$, the distribution of $T({\boldsymbol X};{\boldsymbol x};\theta )$ does not depend on any unknown parameters.

 In the above setup, suppose we are interested in testing the hypotheses
\begin{equation}\label{H0.g}
H_0:\theta \le {\theta }_0\ \ \ \ \ \ \ \ vs.\ \ \ \ \ \ \ H_1:\theta >{\theta }_0, \tag{A.1}
\end{equation}
 for a specified  ${\theta }_0$. The generalized test variable, denoted by $T^*({\boldsymbol X};{\boldsymbol x};\theta )$, is defined as follows:

\noindent (i) The value of $T^*({\boldsymbol X};{\boldsymbol x};\theta )$ at ${\boldsymbol X}\ =\ {\boldsymbol x}$\textit{ }is free of any unknown parameters.

\noindent (ii) The distribution of $T^*({\boldsymbol X};{\boldsymbol x};\theta )$ is stochastically monotone (i.e., stochastically increasing or stochastically decreasing) in $\theta $ for any fixed ${\boldsymbol x}$ and  $\lambda$.

\noindent (iii) The distribution of $T^*({\boldsymbol X};{\boldsymbol x};\theta )$ is free of any unknown parameters.

Let $t^*=T^*\left({\boldsymbol x};{\boldsymbol x};{\theta }_0\right)$, the observed value of $T^*({\boldsymbol X};{\boldsymbol x};\theta )$ at $\left({\boldsymbol X};\theta \right)=({\boldsymbol x};{\theta }_0)$. When the above conditions hold, the generalized \textit{p}-value for testing the hypotheses in \eqref{H0.g} is defined as
\begin{equation}\label{pv}
 p = P\left(T^*\left({\boldsymbol X};{\boldsymbol x};{\theta }_0\right)\le t^*\right)
\tag{A.2}
\end{equation}
where $T^*({\boldsymbol X};{\boldsymbol x};\theta )$ is stochastically decreasing in $\ \theta $. The test based on the generalized \textit{p}-value rejects $H_0$ when the generalized \textit{p}-value is smaller than a nominal level $\gamma $. However, the size and power of such a test may depend on the nuisance parameters.

\section*{Acknowledgements}
The authors would like to thank  two referees for their helpful comments and suggestions which have contributed to improving the manuscript.

\bibliographystyle{apa}

\end{document}